\newtheorem{theorem}{Theorem}[section]
\newtheorem{lemma}[theorem]{Lemma}
\newtheorem{proposition}[theorem]{Proposition}
\theoremstyle{remark}
\theoremstyle{example}
\newcommand{\F}{\mathbb{F}}
\newcommand{\Z}{\mathbb{Z}}
\newcommand{\Q}{\mathbb{Q}}
\newcommand{\C}{\mathbb{C}}
\newcommand{\abk}{\allowbreak} 
\newcommand{\GL}{\mathrm{GL}}
\newcommand{\SL}{\mathrm{SL}}
\newcommand{\Sp}{\mathrm{Sp}}
\newcommand{\cl}{\mathrm{cl}}
\begin{document}

\title[Experimenting with symplectic hypergeometric monodromy
groups]{Experimenting with symplectic hypergeometric monodromy
groups}

\author{A.~S.~Detinko}
\address{Department of Physics and Mathematics\\
Faculty of Science and Engineering\\
University of Hull\\
Hull HU6 7RX\\
UK}
\email{a.detinko@hull.ac.uk}

\author{D.~L.~Flannery}
\address{School of Mathematics, Statistics 
 and Applied Mathematics\\
National University of Ireland, Galway\\
Galway H91TK33\\
Ireland}
\email{dane.flannery@nuigalway.ie}

\author{A.~Hulpke}
\address{Department of Mathematics\\
Colorado State University\\
Fort Collins\\
 CO 80523-1874\\
USA}
\email{Alexander.Hulpke@colostate.edu}

\subjclass[2010]{20-04, 20G15, 20H25, 68W30}
\keywords{symplectic group, Zariski density, 
strong approximation, 
algorithm}

\begin{abstract}
We present new computational results for symplectic 
monodromy groups of hypergeometric differential
equations. 
In particular, we compute the arithmetic closure of 
each group, sometimes justifying arithmeticity. 
The results are obtained by extending
our earlier algorithms for Zariski dense groups, 
based on the strong approximation and congruence 
subgroup properties.
\end{abstract}

\maketitle

\section{Introduction}~\label{intro}

This paper is a sequel to
\cite{Density}, which treated symplectic monodromy 
groups of hypergeometric differential equations as a 
test case.
Deciding arithmeticity of such a group   
in its Zariski closure is a basic problem
(see \cite[Section~3.5]{Sarnak} and \cite[p.~326]{BH}). 
More generally, one asks whether the group is arithmetic, 
or whether it is \emph{thin}, i.e., Zariski dense but not 
arithmetic in the ambient algebraic group.
This problem has received much attention.
It was solved completely for monodromy 
groups associated with Calabi--Yau 
manifolds~\cite{BravThomas,Singh,SinghVenky}, which
are $4$-dimensional symplectic linear groups
over $\Q$. Note also the results of \cite{FuchsMonodromy}, 
demonstrating thinness of certain orthogonal 
hypergeometric monodromy groups. 

Our approach to all questions emphasizes computer-aided 
experimentation. We compute the 
arithmetic closure $\cl(H)$ of a dense group $H$, 
the `closest' arithmetic overgroup of $H$ (specifically,
we compute the index of $\cl(H)$, and the level of the 
maximal principal congruence subgroup that $\cl(H)$ 
contains). Then $\cl(H)$ is used to investigate $H$.
Sometimes we are able to prove that $H$ is arithmetic.
Moreover, we produce comprehensive extra information 
about all symplectic hypergeometric monodromy groups in the
degrees considered.

Our methods are based on the strong approximation 
and congruence subgroup properties for the symplectic group.   
In Section~\ref{SATiCSP}, we extend algorithms developed 
in \cite{Density} for dense subgroups of $\Sp(n, \Z)$ to 
accept dense subgroups of $\Sp(n, \Q)$.  
Section~\ref{Hyper} provides relevant background on 
hypergeometric groups, and details of our experimental 
strategy.
Output for all dense hypergeometric 
monodromy subgroups of the symplectic group of degree $4$
is tabulated in Section~\ref{append}. We discuss
this data in light of work by other authors, noting  
new proofs of arithmeticity and new index calculations.
As further illustration, we give sample output for 
groups of degree $6$.

We set down some notation. Input groups for all algorithms
are finitely generated. Let $H = \langle S\rangle$ 
where $S = \{g_1, \ldots , g_r\}
\subseteq \GL(n,\Q)$. The subring of
$\Q$ generated by the entries of the $g_i$ and $g_i^{-1}$ 
will be denoted $R$.
Thus $R=\frac{1}{\mu}\Z$ for a 
positive integer $\mu$.
If $m$ is coprime to $\mu$ then the congruence 
homomorphism $\varphi_m$ induced by natural surjection 
$\Z\rightarrow \Z_m = \Z/m\Z$ maps $\GL(n, R)$ into 
$\GL(n, \Z_m)$. 

Throughout, $\F$ is a field and $1_m$ is the $m\times m$ 
identity matrix. Let $V$ be the $\F$-vector 
space of dimension $n=2s>2$, and let $\Phi$ be the 
matrix of a non-degenerate skew-symmetric bilinear 
form on $V$ with respect to a basis of $V$. 
The full symplectic group in $\GL(n,\F)$ preserving 
$\Phi$ is denoted $\Sp(\Phi, \F)$. 
If $D\subseteq\F$ is a unital subring then 
$\Sp(\Phi,D):=  \Sp(\Phi, \F)\cap \GL(n,D)$.
We write $\Sp(n, D)$ instead of $\Sp(\Phi, D)$ if
\[
\Phi = J_n:={\small \left(
\!\!
\renewcommand{\arraycolsep}{.15cm}
\begin{array}{rr} 0_{s} & 1_{s}
\\
\vspace*{-9pt}& \\
-1_{s} & 0_{s}
\end{array} \! \right)}.
\]   
Since $\Sp(\Phi, \F)$ and $\Sp(n, \F)$ are 
$\GL(n,\F)$-conjugate, often it suffices to deal with 
the latter rather than the former group.
The shorthand $\Sp_n$ stands for the symplectic 
group when $\F$ and $\Phi$ are unimportant.

\section{Computing with dense subgroups of symplectic groups}
\label{SATiCSP}

In this section we establish the theoretical foundation for
our algorithms. 

\subsection{Strong approximation and computing}
Let $H$ be a finitely generated dense subgroup 
of $\Sp(n, \Q)$.
The strong approximation theorem guarantees 
that $H$ surjects onto $\Sp(n, p)$ 
for almost all primes 
$p\in \Z$~\cite[Corollary~3, Window~9]{LubotzkySegal}.
Let $\Pi(H)$ be the (finite) set of primes $p$ such that 
$p\nmid \mu$ and $\varphi_p(H) \neq \Sp(n, p)$. 
Below we outline how to compute $\Pi(H)$. 

In \cite{SAT} we developed a method to compute the set
 of primes $p$ such that $\varphi_p(H) \neq \abk \SL(n, p)$ 
for dense $H\leq \SL(n, \Q)$. This relies on
testing irreducibility of the adjoint module of $H$, 
and the classification of maximal subgroups of 
$\SL(n, p)$. Something similar could be done
for dense subgroups of $\Sp(n, \Q)$. 

For a dense subgroup $H$ of $\SL(n, \Z)$ or 
$\Sp(n, \Z)$, another way to compute $\Pi(H)$ is 
described in \cite[Section~3]{Density} (see also 
\cite[Section~2.5]{DensityFurther}).
Here we must know an explicit transvection in $H$ 
(recall that a transvection $\tau\in \GL(n,\F)$ is a 
unipotent element such that $1_n-\tau$ has 
rank $1$). While an arbitrary dense subgroup of 
$\Sp_n$ may not contain a transvection, the groups in 
our experiments do.
\begin{proposition}
\label{dense}
Suppose that $H$ is a finitely generated 
subgroup of $\Sp(n, \Q)$ containing a 
transvection $\tau$. Then $H$ is dense if
and only if $\langle \tau \rangle^H$ is absolutely 
irreducible.
\end{proposition}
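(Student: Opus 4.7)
The strategy is to analyze the Zariski closure $N$ of $\langle\tau\rangle^H$ inside the Zariski closure $G$ of $H$, and to show that $\langle\tau\rangle^H$ is absolutely irreducible precisely when $N = \Sp_n$, which forces $G = \Sp_n$ since $N \subseteq G \subseteq \Sp_n$. The forward direction is short: if $H$ is Zariski dense, then $G = \Sp_n$ normalizes $N$, so $N$ is a closed normal subgroup of the almost simple algebraic group $\Sp_n$; since $\tau \neq \pm 1_n$ lies in $N$ and the center of $\Sp_n$ is $\{\pm 1_n\}$, we must have $N = \Sp_n$, whence $\langle\tau\rangle^H$ is Zariski dense and acts absolutely irreducibly on $\F^n$.

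For the reverse direction I would first carry out three structural reductions on $N$. (i) Connectedness: each generator of $\langle\tau\rangle^H$ is an $H$-conjugate of $\tau$, hence a unipotent element, and in characteristic zero unipotent elements lie in the identity component, so $\langle\tau\rangle^H \subseteq N^0$ and $N = N^0$. (ii) Semisimplicity: absolute irreducibility forces the unipotent radical of $N$ to be trivial, since it would otherwise fix a nonzero subspace; and by Schur's lemma the centre of $N$ lies in the scalars of $\Sp_n$, namely $\{\pm 1_n\}$, so the central torus is trivial. (iii) Almost simplicity: if $N$ were an almost direct product $N_1 N_2$ of proper semisimple normal subgroups, then $V \cong V_1 \otimes V_2$ as an $N$-module with $V_i$ a simple $N_i$-module, and a standard calculation of Jordan structure on $(g_1 \otimes g_2) - 1_n$ shows that a rank-one element forces all but one tensor factor to be one-dimensional; the corresponding $N_i$ would then act trivially on $V$, contradicting faithfulness in $\Sp_n$.

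The main obstacle is the final step: to show that an almost simple connected closed subgroup $N$ of $\Sp_n$ acting absolutely irreducibly on $V = \F^n$ and containing a transvection must coincide with $\Sp_n$. Equivalently, one must rule out every non-standard irreducible rational representation of a simple algebraic group in characteristic zero from sending a non-identity unipotent element to an element of Jordan rank one on the representation space. I would settle this by invoking the classification of irreducible closed subgroups of the classical groups (via Dynkin's work, or via the McLaughlin and Zalesskii--Serezhkin classification of absolutely irreducible linear groups generated by transvections, noting that $\langle\tau\rangle^H$ is itself topologically generated by the transvections $\tau^H$). With this step secured, $N = \Sp_n$ and hence $G = \Sp_n$, completing the proof.
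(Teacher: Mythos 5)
Your argument is correct, but a direct comparison is slightly artificial: the paper's own ``proof'' of Proposition~\ref{dense} is a one-line remark that the proof of \cite[Proposition~3.7]{Density} for $H\leq \Sp(n,\Z)$ carries over to $H\leq\Sp(n,R)$, so the real content lives in the cited paper. What you have written is a self-contained reconstruction, and its engine is the same one underlying the results the paper leans on (compare the quoted \cite[Theorem~3.2]{Density}: absolutely irreducible plus a transvection forces all of $\Sp(n,p)$): namely, the classification of irreducible groups generated by transvection subgroups. Two small points of calibration. First, Zalesskii--Serezhkin is a classification over \emph{finite} fields and does not apply directly to your closed subgroup $N$ in characteristic zero; the correct citation for the final step is McLaughlin's theorem on irreducible groups generated by \emph{full} transvection subgroups over an arbitrary field (or Dynkin/Liebeck--Seitz on subgroups containing long root subgroups), together with your observation that in characteristic zero the Zariski closure of $\langle\tau^h\rangle$ is the full one-parameter transvection group. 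Second, once McLaughlin is invoked on the abstract subgroup generated by these full transvection subgroups (which is irreducible because it contains $\langle\tau\rangle^H$), your reductions (i)--(iii) to a connected, semisimple, almost simple $N$ become unnecessary --- McLaughlin already yields $\Sp(n,\overline{\Q})$ on the nose, hence $N=\Sp_n$ and density of $H$. So the proof is right, just somewhat more elaborate than it needs to be at the structural-reduction stage, and in need of one corrected reference at the decisive step.
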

\begin{proof}
The proof of \cite[Proposition~3.7]{Density} for 
$H \leq \Sp(n, \Z)$ remains valid for $H\leq \Sp(n,R)$.
\end{proof}

Having identified a transvection 
$\tau$ in $H\leq \Sp(n, \Q)$, 
Proposition~\ref{dense} allows us to apply the 
procedure ${\tt IsDense}(H, \tau)$ from 
\cite[Section~3.2]{Density} to test density of 
$H$.  Given dense $H$, we compute $\Pi(H)$ 
using ${\tt PrimesForDense}(H, \tau)$ from 
\cite[Section~3.2]{Density} as follows.
Let $\{A_1 , \ldots , \abk A_{n^2}\}$ be 
a basis of the enveloping algebra 
$\langle N \rangle_{\Q}$, 
where $N=\abk \langle \tau \rangle^H$ and the 
$A_i$ are words in $S$. We can find a finite set 
$\Pi_1$ of primes such that the $\varphi_p(A_i)$ are 
linearly independent and $\varphi_p(1_n-\tau)\neq 0$
for any prime $p\not \in \Pi_1$. 
That is, if $p\not \in \Pi_1$
then $\varphi_p(N)$ is absolutely irreducible and 
contains the transvection $\varphi_p(\tau)$; so
$\varphi_p(H) = \Sp(n, p)$ 
by \cite[Theorem~3.2]{Density}.
Thus $\Pi(H) \subseteq \Pi_1$.
We obtain $\Pi(H)$ after checking whether 
$\varphi_p(H) = \Sp(n, p)$ for each 
$p \in \abk \Pi_1$. 
This last step uses recognition 
algorithms for matrix groups over 
finite fields~\cite{gaprecog}.

\subsection{Integrality and computing the $\Z$-intercept}
\label{integral}

Some of our algorithms require
us to compute the `$\Z$-points' $H_\Z:=H \cap \GL(n, \Z)$
of input $H \leq \GL(n, \Q)$.
 This is possible by the next result. 
\begin{lemma}[{\cite[Lemma~5.1]{Arithm}}]
\label{Integrality}
For a finitely generated subgroup $H$ of $\GL(n, \Q)$,
the following are equivalent:
\begin{itemize}
\item $H$ is integral, i.e., $H$ is 
conjugate to a subgroup of $\GL(n,\Z)$, 
\item $|H:H_\Z|$ is finite,
\item there exists a positive integer $d$ such that $dH$
consists of $\Z$-matrices.
\end{itemize}
\end{lemma}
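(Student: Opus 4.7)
The plan is to prove the cyclic chain of implications $(1) \Rightarrow (2) \Rightarrow (3) \Rightarrow (1)$, with the first two implications being essentially bookkeeping about denominators and the last one requiring a genuine construction of an $H$-invariant lattice.

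For $(1) \Rightarrow (2)$, I would fix $g \in \GL(n, \Q)$ with $K := g^{-1} H g \leq \GL(n, \Z)$, and choose a positive integer $d$ such that both $g$ and $g^{-1}$ have entries in $\frac{1}{d}\,\mathrm{Mat}_n(\Z)$. For any $k \in K$ with $k \equiv 1_n \pmod{d^2}$, writing $k = 1_n + d^2 M$ with $M$ an integer matrix gives $g k g^{-1} = 1_n + d^2 g M g^{-1}$, and the factor $d^2$ absorbs the denominators carried by $g$ and $g^{-1}$; the same reasoning applied to $k^{-1}$ (which lies in the same principal congruence subgroup) shows $(g k g^{-1})^{-1} \in \GL(n, \Z)$. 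Hence the kernel of the reduction $K \to \GL(n, \Z/d^2 \Z)$ is carried by conjugation into $H_\Z$, and since the target is finite this kernel has finite index in $K$, forcing $|H : H_\Z| < \infty$. For $(2) \Rightarrow (3)$, I would pick a finite transversal $h_1, \ldots, h_m$ for $H_\Z$ in $H$, let $d$ clear the denominators of all entries of all the $h_i$, and observe that any $h = h_i x$ with $x \in H_\Z \subseteq \GL(n, \Z)$ satisfies $dh = (dh_i) x \in \mathrm{Mat}_n(\Z)$.

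The main content is $(3) \Rightarrow (1)$. Assuming $dH \subseteq \mathrm{Mat}_n(\Z)$, every element of $H$ has entries in $\frac{1}{d}\Z$, and I would form the $\Z$-module
\[
L \,:=\, \sum_{h \in H} h \cdot \Z^n,
\]
which is sandwiched between $\Z^n$ (take $h = 1_n$) and $\frac{1}{d}\Z^n$ (by the denominator bound). Hence $L$ is a finitely generated torsion-free $\Z$-module of rank $n$, i.e., a full lattice in $\Q^n$. By construction $H L \subseteq L$, and since $H$ is a group we in fact get $hL = L$ for every $h \in H$. Choosing $g \in \GL(n, \Q)$ with columns a $\Z$-basis of $L$ yields $g \Z^n = L$, and therefore $g^{-1} H g \leq \GL(n, \Z)$.

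The principal obstacle is the last implication. The crucial step is that the uniform denominator bound in (3) forces the a priori infinite sum defining $L$ to land inside the finitely generated module $\frac{1}{d}\Z^n$, and so to be itself finitely generated; without such a bound $L$ could easily be dense in $\Q^n$ and no conjugator $g$ would exist. The remainder of the argument is a clean illustration of how a pointwise denominator control on $H$ is upgraded to a single global conjugator into $\GL(n, \Z)$.
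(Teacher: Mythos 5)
Your proof is correct: the cyclic chain $(1)\Rightarrow(2)\Rightarrow(3)\Rightarrow(1)$ is sound, and the key step --- building the $H$-invariant lattice $L=\sum_{h\in H}h\Z^n$, sandwiched between $\Z^n$ and $\frac{1}{d}\Z^n$ so that it is a full lattice, and conjugating by a basis matrix of $L$ --- is exactly the standard argument behind \cite[Lemma~5.1]{Arithm}. The paper itself gives no proof (it only cites that reference), so there is nothing further to compare; your write-up is a complete and faithful reconstruction.
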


In \cite[Section~5]{Arithm} we explain
how to find $d$ if $|H : H_{\Z}|$ is finite.
The procedure ${\tt IntegralIntercept}(S,d)$ 
from \cite{ArithSolvable}
then computes a generating set of $H_{\Z}$.  
However, its practicality is limited. 
In our experiments, we calculated a transversal 
of $H_{\Z}$ in $H$  
using an orbit algorithm for the multiplication 
action by $H$, starting with $1_n$. Suppose that
$g$ is an image so obtained.
We test whether $gh^{-1}\in \abk \GL(n,\Z)$ 
for each known orbit element $h$. If this happens for 
some $h$ then $g$ lies in the same coset of $H_{\Z}$ 
(and will yield a Schreier generator of $H_{\Z}$). 
If no such $h$ exists then $g$ is a representative of
 a new coset. 
 
We avail of the following reduction when $|H:H_{\Z}|$ 
is large. Let $\sigma$ be an integer whose prime 
divisors divide the denominators of entries 
in elements of $H$; 
so $\frac{1}{\sigma}\Z\subseteq R$.
Then $H_{\Z}\le K \le H$
where $K = H\cap \Sp(n,\frac{1}{\sigma}\Z)$. 
Membership in $K$ is tested by inspection of 
matrix denominators.
We thus divide the transversal length into
two factors, first calculating a transversal of 
$K$ in $H$, and then a transversal of 
$H_{\Z}$ in $K$. 

A potential complication is too 
many Schreier generators for $H_\Z$. 
Rather than keeping them
all, we randomly select about $300$  
subproducts of Schreier generators for each 
transversal step (cf.~\cite{Babaietal}).
Conceivably we may not then compute all of 
$H_{\Z}$, but merely a proper subgroup. At the 
end we therefore verify, by a calculation in the 
congruence image modulo the level of $H_{\Z}$ 
(see Section~\ref{CSP}), that all Schreier 
generators lie in the subgroup generated 
by the chosen set.

\subsection{The congruence subgroup property and computing}
\label{CSP}
Suppose that $H\leq \abk \Sp(n, \Q)$ is arithmetic, 
i.e., commensurable with $\Sp(n,\Z)$.  
The congruence subgroup property holds for 
$\Sp(n,\Z)$, so that $H_{\Z}$ contains a principal 
congruence subgroup $\ker \varphi_r \cap \Sp(n, \Z)$ for 
some modulus $r>1$. The \emph{level} of $H$, 
denoted $M(H)$, is the modulus 
of the unique maximal principal congruence subgroup  
in $H_{\Z}$. 

Now suppose that $H$ is a finitely generated dense
subgroup of $\Sp(n, \Z)$. The \emph{arithmetic closure} 
$\cl(H)$ of $H$ in $\Sp(n, \Z)$ is the intersection 
of all arithmetic subgroups of $\Sp(n, \Z)$ 
containing $H$ (see \cite[Section~3.3]{Density}).
If $H \leq \Sp(n, \Q)$ is not 
necessarily arithmetic, but $H_{\Z}$ is dense,
then we set $M(H)=M(\cl(H_{\Z}))$.

The level is a key component of our algorithms for computing 
with dense $H\leq \Sp(n,\Z)$.  
If $|\Sp(n, \Z) : \cl(H)|$ is not too large, then we may 
test arithmeticity of $H$ by 
coset enumeration~\cite[Chapter~5]{HEO}. 

The algorithm ${\tt LevelMaxPCS}$ from
\cite{Density} returns $M(H)$ for input 
dense $H\leq \Sp(n, \Z)$ and $\Pi(H)$. 
In practice, for $H \leq \Sp(n, \Q)$ such that 
$H_{\Z}$ is dense, we use
${\tt LevelMaxPCS}(H_{\Z}, \Pi(H_{\Z}))$;
by definition this returns $M(H)$. 
Certainly $\Pi(H) \subseteq \Pi(H_{\Z})$, 
but these sets need not coincide. For example,
$p\in \Pi(H_{\Z})$ could divide $\mu$.

\section{Hypergeometric groups}
\label{Hyper}
\subsection{Background}
\label{4.1}
We adhere mainly to the conventions 
of \cite{BH}.

Let $a = \abk (a_1, \ldots , a_n)$ and 
$b = (b_1, \ldots , b_n)$, 
where $a_j$, $b_k \in \C^\times$ and $a_j\neq b_k$ 
for $1 \leq j,k \leq n$.
A subgroup of $\mathrm{GL}(n, \C)$ generated by 
elements $h_{\infty}$, $h_0$ such that 
$\mathrm{det}(t 1_n - h_{\infty}) 
={\textstyle \prod}_{j=1}^n(t-a_j)$ 
and
$\mathrm{det}(t 1_n - h_0^{-1}) 
= {\textstyle \prod}_{j=1}^n(t-b_j)$
is called a \emph{hypergeometric group}, 
and denoted $H(a,b)$. It is absolutely irreducible by 
\cite[Proposition~3.3]{BH}.
The element $h_1:= (h_0 h_{\infty})^{-1}$ of $H(a,b)$
is a reflection, i.e., $h_1-1_n$ has rank $1$.

If $a_j = \mathrm{exp}(2\pi {\rm i} \alpha_j)$ and  
$b_j = \mathrm{exp}(2\pi {\rm i} \beta_j)$ for 
$\alpha_j, \beta_j \in \C$, 
then $H(a, b)$ is the monodromy group of a hypergeometric 
differential equation~\cite[Proposition~3.2]{BH}. 
\begin{theorem}[{\cite[Theorem~3.5]{BH}}]
\label{BHtheorem}  
For $a_j$, $b_k$ as above, let 
\[
f(t) = {\textstyle \prod}_{j=1}^n(t-a_j) = 
 t^n + A_1t^{n-1}+ \cdots + A_n
\]
and
\[ 
g(t) = {\textstyle \prod}_{j=1}^n(t-b_j) = 
 t^n + B_1t^{n-1}+ \cdots + B_n.
\]
Further, let 
\[
A =  
\left(\begin{array}{cccc} 0 & \cdots & 0 & -A_n
\\
1 & \cdots & 0 & -A_{n-1}
\\
\vdots & \ddots & \vdots & \vdots
\\
0 & \cdots & 1 & -A_{1}
\end{array} \! \right), \quad
B =  
\left(\begin{array}{cccc} 0 & \cdots & 0 & -B_n
\\
1 & \cdots & 0 & -B_{n-1}
\\
\vdots & \ddots & \vdots & \vdots
\\
0 & \cdots & 1 & -B_{1}
\end{array} \! \right).
\]
Then $h_{\infty} = A$, $h_0 = B^{-1}$
generate a hypergeometric group $H(a, b)$ for 
$a = (a_1, \ldots , a_n)$ and $b = (b_1, \ldots , b_n)$. 
Any hypergeometric group with the same $a$, $b$ is
$\mathrm{GL}(n, \C)$-conjugate to this one. 
\end{theorem}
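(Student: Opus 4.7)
My plan is to verify the existence clause by a direct computation with companion matrices, and then to establish uniqueness up to $\GL(n,\C)$-conjugacy by a rigidity argument of Levelt type. For existence, I would first note that $A$ and $B$ are the standard companion matrices of $f$ and $g$ respectively, so a routine cofactor expansion along the first row gives $\det(t1_n - A) = f(t) = \prod_j(t-a_j)$ and $\det(t1_n - B) = g(t) = \prod_j(t-b_j)$. With $h_\infty := A$ and $h_0 := B^{-1}$, both characteristic polynomial conditions in the definition of $H(a,b)$ are then immediate. The hypothesis $a_j \neq b_k$ forces $f \neq g$, and because $A$ and $B$ differ only in the last column, $A - B$ has rank exactly one. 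Consequently $h_1 = (h_0 h_\infty)^{-1} = A^{-1}B$ satisfies $h_1 - 1_n = A^{-1}(B - A)$, of rank one, so $h_1$ is a reflection, completing the existence half.

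For uniqueness, let $H' = \langle h'_\infty, h'_0\rangle$ be another hypergeometric group with the same $a$, $b$. The goal is to produce a single $P \in \GL(n,\C)$ conjugating $(h'_\infty, h'_0)$ to $(A, B^{-1})$. Since $h'_1 - 1_n$ has rank one, the identity $h'_0{}^{-1} - h'_\infty = h'_\infty(h'_1 - 1_n)$ shows that $K := \ker(h'_0{}^{-1} - h'_\infty)$ is a hyperplane. The strategy is to find $v \in K$ that is simultaneously a cyclic vector for $h'_\infty$. Given such a $v$, the ordered basis $v, h'_\infty v, \ldots, (h'_\infty)^{n-1}v$ puts $h'_\infty$ in companion form with characteristic polynomial $f$, hence equal to $A$; and since $v, h'_\infty v, \ldots, (h'_\infty)^{n-2}v$ all lie in $K$, the operator $h'_0{}^{-1}$ agrees with $h'_\infty$ on the first $n-1$ basis vectors, so in the same basis it also takes companion form, with characteristic polynomial $g$, and so equals $B$. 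Taking $P$ to be the matrix whose columns are this basis delivers the desired simultaneous conjugation.

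The main obstacle is producing the vector $v$. Let $w$ be a nonzero linear functional with $K = \ker w$, and consider the linear map $\phi\colon V \to \C^{n-1}$, $v \mapsto (w(v), w(h'_\infty v), \ldots, w((h'_\infty)^{n-2}v))$. A dimension count gives $\dim \ker \phi \geq 1$, and any $v \in \ker \phi$ automatically satisfies $(h'_\infty)^i v \in K$ for $0 \leq i \leq n-2$. What remains is to select such a $v$ that is cyclic for $h'_\infty$. Absolute irreducibility of $H'$ (BH Proposition 3.3) prevents $K$ from being $h'_\infty$-invariant, since otherwise $K$ would be stable under both $h'_\infty$ and $h'_0{}^{-1}$, hence under all of $H'$. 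Combining this non-invariance with the fact that non-cyclic vectors for $h'_\infty$ (when it has one) form a proper Zariski closed subset of $V$, a transversality argument produces the required $v$. Making this final step precise, and handling the case where the eigenvalue data of $h'_\infty$ may include repetitions, is the essence of Levelt's rigidity; I would follow that route, as in BH's own treatment, to close the argument.
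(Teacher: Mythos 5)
The paper gives no proof of this statement: it is imported verbatim from Beukers--Heckman \cite[Theorem~3.5]{BH} (Levelt's rigidity theorem), so there is no internal argument to compare against. Judged on its own terms, your existence half is complete and correct: the companion-matrix computation, the observation that $B-A$ has rank exactly one because $a_j\neq b_k$ forces $f\neq g$, and the conclusion that $h_1-1_n=A^{-1}(B-A)$ has rank one are all fine. Your uniqueness half also has the right skeleton, which is exactly Levelt's: pass to the hyperplane $K=\ker((h_0')^{-1}-h_\infty')$, find $v$ with $(h_\infty')^i v\in K$ for $0\le i\le n-2$ via the map $\phi$, and read off simultaneous companion forms in the basis $v, h_\infty'v,\ldots,(h_\infty')^{n-1}v$.

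The genuine gap is the step you defer: certifying that a nonzero $v\in\ker\phi$ is cyclic for $h_\infty'$. A genericity-plus-transversality argument cannot close this. If $f$ has repeated roots, $h_\infty'$ may a priori be derogatory, in which case it has no cyclic vector at all and the ``proper Zariski closed subset of non-cyclic vectors'' is all of $V$; and even in the non-derogatory case $\ker\phi$ can be one-dimensional, so knowing merely that $K$ is not $h_\infty'$-invariant does not prevent $\ker\phi$ from sitting inside the non-cyclic locus. The correct closing move uses irreducibility more strongly than you do, and is short: let $U$ be the span of $v,h_\infty'v,(h_\infty')^2v,\ldots$, of dimension $d$ say, so $U$ is spanned by $v,\ldots,(h_\infty')^{d-1}v$. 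If $d<n$, all of these spanning vectors lie in $K$ because $d-1\le n-2$, and since $(h_0')^{-1}$ agrees with $h_\infty'$ on $K$ it maps each $(h_\infty')^{i}v$ to $(h_\infty')^{i+1}v\in U$; hence $U$ is invariant under both generators and therefore under all of $H'$, contradicting absolute irreducibility (\cite[Proposition~3.3]{BH}, which you already invoke for a weaker purpose). This argument requires no case distinction on eigenvalue multiplicities and shows as a byproduct that $h_\infty'$ is automatically non-derogatory; with this replacement your outline becomes a complete proof.
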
 

We are concerned with $H(a, b)$ that are
\begin{itemize}
\item[(i)]  symplectic, 
\item[(ii)] dense in $\Sp_n$, 
\item[(iii)] integral.
\end{itemize}
\noindent There are only finitely many 
$\GL(n,\Q)$-conjugacy classes of such $H(a,b)$.
By \cite[Proposition~6.1]{BH}, $H(a, b)$ is symplectic 
if and only if 
$\{a_1, \ldots , a_n\} = 
 \{a_1^{-1}, \abk \ldots , a_n^{-1}\}$, 
$\{b_1, \ldots , b_n\} = 
 \{b_1^{-1}, \ldots , b_n^{-1}\}$,
and $\delta := \mathrm{det}(h_1) = 1$ 
(whence $h_1$ is a transvection). 
We remark that $H(a,b)$ need not be dense in 
$\Sp_n$ (by, e.g., \cite[Theorem~6.5]{BH}).
Additionally, $H(a, b)$ is integral if and only if 
the $a_j$ and $b_k$ are roots of unity.  
Hence the characteristic polynomials $f(t)$, $g(t)$ 
of $A$, $B$ should be products of 
coprime cyclotomic polynomials.
Under these conditions, $H(a, b) \leq \Sp(\Phi, \Z)$ 
for some $\Phi$. Since $H(a,b)$ is absolutely 
irreducible, $\Phi$ is unique up to a scalar multiple. 

\subsection{Strategy}
\label{4.2}
Assuming that $H(a,b)\leq \GL(n,\Q)$ satisfy the 
requirements (i), (ii), (iii) of 
Section~\ref{4.1}, we proceed as follows.
\begin{itemize}
\item[(I)] 
We list all pairs $f(t)$, $g(t)$ of polynomials of degree $n$, 
each of which is the product of coprime cyclotomic polynomials, 
and such that $\delta = 1$
(for $h_\infty$, $h_0$ as in Theorem~\ref{BHtheorem}). 
Non-dense $H(a,b)$ are excluded by running 
${\tt IsDense}(H(a,b), h_1)$.

\item[(II)] 
The matrix $\Phi$ of a symplectic form 
fixed by $H(a,b)$ may be interpreted as a homomorphism 
between the natural module of $H(a,b)$ and its dual. We 
use MeatAxe techniques~\cite[Section~7.5.2]{HEO} 
to compute $\Phi$. Next, 
$g \in \GL(n, \Q)$ such that $g J_n
g^\top = \Phi$ is found by simple linear algebra. Then
$L=L(a,b) := g^{-1}H(a,b)g \leq \Sp(n, \Q)$.
(We seek a copy of $H(a,b)$ that preserves the standard 
form because it is more convenient for computing; e.g., 
we have a presentation of $\Sp(n,\Z)$ but not of 
$\Sp(\Phi,\Z)$.) Note that $h := \abk
g^{-1}h_1g$ is a transvection in $L$.

Since $\Phi$ is not strictly unique, 
and $g$ can vary by factors stabilizing the form,
$L$ depends on choices made. These might also 
impact $|L:L_{\Z}|$, which we want to keep small for 
efficiency purposes and to avoid large 
increases in the number of Schreier generators 
(remember that $|L:L_{\Z}|<\infty$ is finite by 
Lemma~\ref{Integrality}).
Our code therefore uses heuristics to determine
candidates for $\Phi$ and $g$. It calculates 
$L=\langle T\rangle$ and 
$\overline{k}= \mathrm{lcm} \, \{k\; |\; l^k\in 
L_{\Z} \ \, \forall \ l\in T \}$
(as a stand-in for $|L:L_{\Z}|$). Then $g$ is 
chosen so that $\overline{k}$ is minimal. 

\item[(III)]  
We compute $L_{\Z}$ (see Section~\ref{integral}).
 Although perhaps $h\not \in L_{\Z}$, we can always 
find a transvection 
$\lambda = \abk h^k \in L_{\Z}$ for some $k$.

\item[(IV)] We compute 
\begin{itemize}
\item[] $\Pi(L_{\Z}) =
{\tt PrimesForDense}(L_{\Z}, \lambda)$
\item[] ${\tt LevelMaxPCS}(L_{\Z}, \Pi(L_{\Z}))$
\item[]  $|\Sp(n,\Z):\mathrm{cl}(L_{\Z})|$.
\end{itemize}

\item[(V)] When $|\Sp(n, \Z) : \cl(L_{\Z})|$ is 
sufficiently small, we express the generators of 
$L_{\Z}$ as words in generators of 
$\Sp(n,\Z)$~\cite{Words}, and try to find 
$|\Sp(n, \Z) : L_{\Z}|$ by coset enumeration. 
If this succeeds, i.e., confirms that the indices 
of $L_{\Z}$ and $\mathrm{cl}(L_{\Z})$ are equal,
then we have proved that $L_{\Z}$
and thereby $H(a,b)$ are arithmetic
(cf.~\cite[Theorem~4.1, p.~204]{PlatonovRapinchuk}). 

As the cost of coset enumeration is bounded below by the index, we restricted
our attempts 
to groups with (presumed) indices less than $10^7$.
If the index was expected to be in the range 
$10^7, \ldots , 10^{14}$, then we tried to find an 
intermediate subgroup $L_{\Z}<U<\Sp(n, \Z)$ such 
that $|U : L_{\Z}|\le 10^7$. 
Enumeration was undertaken with a presentation 
for $U$ found by Reidemeister--Schreier 
rewriting~\cite[Chapter~5]{HEO}.
Suitable $U$ are generated by $L_{\Z}$ together
with congruence subgroups in $\Sp(n, \Z)$ of level 
dividing the level of $\mbox{cl}(L_{\Z})$.
\end{itemize}

By \cite[Theorem~1.1]{SinghVenky}, 
if the leading coefficient 
of $f(t)-g(t)$ has absolute value 
at most $2$ then $H(a, b)$ is arithmetic in 
$\Sp(\Phi, \Z)$. 
At least in degree $4$, we proved arithmeticity 
(and computed the level and index) 
whenever the criterion from \cite{SinghVenky} 
applies, and occasionally when it does not. 
Unfortunately, we lack a method for proving 
non-arithmeticity if
coset enumeration fails.

\section{Experimental results}\label{append}

Our algorithms have been implemented in {\sf GAP}~\cite{Gap}.
In this section, we present the complete results of various
experiments for $n=4$ (Table~\ref{nequals4}),
and a sample for $n=\abk 6$ (Table~\ref{nequals6}).
The results for all 916 groups of degree $6$ are 
available at
{\small
\url{https://www.math.colostate.edu/~hulpke/paper/hypergeom6.pdf}}.

\vspace{1.5pt}

A group with Nr $\leq$ 60 in Table~\ref{nequals4}
has the same number in \cite[Table~1]{SinghVenky},  
while Nr $=m\ge$ 100 matches number $m-100$ in
\cite[Table~2]{SinghVenky}. 
The index and arithmeticity questions for these groups were
studied previously in~\cite{HofmannvanStraten,Singh2}.
Table~\ref{tabnumtrans} gives a correspondence between our 
numbering and the notation used in those two papers.
`$S_xy$' denotes the group in Table $x$
of~\cite{Singh2} with line label $y$. 
`$HS(d,k)$' denotes the group labeled
$(d,k)$ in~\cite{HofmannvanStraten}.

\begin{table}[h]
\begin{tabular}{rl||rl||rl}
1&$HS(1,2)$ &118&$S_{1}12(6)$ &136&$S_{3}7$\\
101&$S_{2}1(1)$, $HS(16,8)$ &119&$S_{2}8(2)$ &137&$S_{4}4$\\
102&$S_{1}1$, $HS(9,6)$ &120&$S_{1}7(1)$ &138&$S_{4}5$\\
103&$S_{2}2$, $HS(12,7)$ &121&$S_{1}10(4)$ &139&$S_{3}8$\\
104&$S_{1}2$, $HS(4,4)$ &122&$S_{1}9(3)$ &140&$S_{4}6$\\
105&$S_{2}3$, $HS(8,6)$ &123&$S_{2}12(6)$ &141&$S_{4}7(1)$\\
106&$S_{1}3$, $HS(6,5)$ &124&$S_{2}10(4)$ &142&$S_{3}9(1)$\\
107&$S_{2}4$, $HS(5,5)$ &125&$S_{2}13(7)$ &143&$S_{4}8(2)$\\
108&$S_{2}5$, $HS(4,5)$ &126&$S_{4}1$ &144&$S_{3}10(2)$\\
109&$S_{1}4$, $HS(3,4)$ &127&$S_{3}2$ &145&$S_{4}11(6)$\\
110&$S_{1}5$, $HS(2,3)$ &128&$S_{3}3$ &146&$S_{3}14(7)$\\
111&$S_{2}6$, $HS(2,4)$ &129&$S_{3}4(4)$ &147&$S_{3}12(5)$\\
112&$S_{1}6$, $HS(1,3)$ &130&$S_{3}5$ &148&$S_{4}10(5)$\\
113&$S_{2}7$, $HS(1,4)$ &131&$S_{3}6$ &149&$S_{3}13(6)$\\
114&$S_{2}11(5)$ &132&$S_{4}2$ &150&$S_{4}9(4)$\\
115&$S_{2}9(3)$ &133&$S_{3}1$ &151&$S_{3}15(8)$\\
116&$S_{1}8(2)$ &134&$S_{4}3(3)$\\
117&$S_{1}11(5)$ &135&$S_{3}11(3)$\\
\end{tabular}
\caption{Label correspondences}
\label{tabnumtrans}
\end{table}

The column `Polynomials' in Table~\ref{nequals4} lists
$f(t)$, $g(t)$ as in (I) of 
Section~\ref{4.2} (the Nr entries in Table~\ref{nequals6} 
derive from the listing of these polynomials).
If $\mu >1$ then its prime divisors are given in 
column `Mu'. 
`Int' is $|L:L_{\Z}|$.
`iLevel' and `iIndex' are level and index of 
$\cl (L_\Z)$ in $\Sp(n,\Z)$, respectively. 
`Coeff' is the absolute value of the leading 
coefficient of $f(t)-g(t)$.
The column `Enum' records whether coset 
enumeration succeeded. We reiterate that if an enumeration 
terminates (signified by a tick $\checkmark$) then it returns 
$|\Sp(n,\Z):L_{\Z}|$, and the input group is arithmetic. 
A dash means that coset enumeration failed to terminate;
of course, this 
does not prove that the group is not arithmetic. 
If the group is thin by an argument of \cite{BravThomas}
(see \cite[Table~2]{Singh2}), so that coset enumeration 
is sure to fail, then a letter T is given.
In cases where large $|\Sp(n,\Z):\mathrm{cl}(L_{\Z})|$ 
implies that coset enumeration is unlikely to succeed, 
we put a cross. Indices in degree $4$ were small enough 
to attempt coset enumeration for all groups that are not 
known to be thin. The size of many indices in degree $6$ 
dissuades any attempt at coset enumeration.

We discuss some test groups of interest.
Table~\ref{nequals4} shows that
the groups Nr $=$ 104, 109 
are arithmetic; the question is open 
in \cite[Table~3]{SinghVenky} (rows 5 and 10) but
proved in \cite[Table~1]{Singh2} (rows 2 and 4).

Our method proves arithmeticity for 
all groups in Tables~1 and 3 of \cite{Singh2} (which are 
listed, respectively proved, to be arithmetic there), 
apart from Nr $=$ 102, 120, 122, 128, 135 
(which all have large index, greater than $4\cdot 10^8$).

The arithmetic groups Nr $=$ 126 and 141 
in Table~\ref{nequals4} are No.s~1 and 7(1) in 
\cite[Table~4]{Singh2}, for which arithmeticity 
or thinness was unknown.
Furthermore, our calculations
show that the groups with Nr $=$ 137, 138, 148, 150 
would have rather small index if they were 
arithmetic. Failure of coset enumeration
therefore seems to suggest that, unless we have been 
unlucky, these groups are more likely to be thin than not.

If the polynomial pair includes 
$(t-1)^4$, then a special base change (also featuring
in~\cite{HofmannvanStraten}) was used, to render
the group in $\Sp(4,\Z)$.
This comprises the basis defined in \cite[(2.2)]{doranmorgan},
followed by the two base changes indicated in Remark~1 and 
Equation~(9) of \cite{ChenYangYui}. Thus, it becomes possible 
to compare our results with those of \cite{HofmannvanStraten}.
We see that $|\Sp(n,\Z):L|$ is the same for Nr $=$ 1, 104,
109, 110, 112; i.e., the groups
$(d,k)$ $=$ (1,2), (4,4), (3,4), (2,3), (1,3) 
in \cite[Theorem~4.3]{HofmannvanStraten}. 
Hofmann and van Straten were not able to compute the 
index of $(d,k)=$ (6,5), whereas we could do so for the 
corresponding group Nr $=$ 106, confirming their estimate. 
Although we were unable to prove 
arithmeticity of the group $(d,k)=(9,6)$, Nr $=$ 102, 
the index $2^83^{14}5^2$ that we computed 
is slightly better than the 
estimate in~\cite{HofmannvanStraten}.

Let  $H_n$, $G_n$ be $H(a,b)$ with $f(t) = (t - 1)^n$ 
and $g(t) = (t^{n+1} - 1)/(t - 1)$, $(t^{n+1} + 1)/(t + 1)$,
respectively. 
The arithmeticity problem for $H_n$ and $G_n$ was posed at the 
`Workshop on Thin Groups and Super Approximation', Institute 
for Advanced Study, Princeton, March 2016.
If $n=4$ then $H_n$ is thin~\cite{BravThomas}; see row 
107 in Table~\ref{nequals4}, or row 8 in \cite[Table~3]{Density}
for $\mathrm{cl}(H_4)$.
The group $G_4$ is arithmetic~\cite[Corollary~1.4]{SinghVenky} 
(row 1 in \cite[Table~3]{Density} and row 112 in 
Table~\ref{nequals4}). 

In degree $6$ we proved arithmeticity for a smaller number of
the groups not covered by \cite[Theorem~1.1]{SinghVenky};
two notable exceptions are rows 468 and 534 of 
Table~\ref{nequals6}.
We have not yet solved the arithmeticity 
problem for $G_6$ or $H_6$. However, the level and index of 
their arithmetic closures are stated in rows 774 and 838 of 
Table~\ref{nequals6}.

\subsubsection*{Postscript}
During revisions of our paper, we became aware of the preprint
\cite{Bajpaietal}.  \mbox{Bajpai~et al.} settle arithmeticity of many
symplectic hypergeometric groups in degree $6$, using an adaptation of
\cite{SinghVenky}.  We tried their method in degrees $4$ and $6$, and note
that sometimes it succeeds when our method fails; but also vice versa.
\newpage

{\small

\begin{longtable}{r|l|c|r|r|r|c|c}%
Nr&Polynomials&$\mu$&Int&iLevel&iIndex&Coeff&Enum\\
\hline
\endhead
\rule{0pt}{2.6ex}1&$\begin{array}{l}
t^{4}{-}4t^{3}{+}6t^{2}{-}4t{+}1%
\\
t^{4}{-}2t^{3}{+}3t^{2}{-}2t{+}1%
\end{array}$%
&%
$ 1 $&%
$1$&%
$2$&%
$2{\cdot}5$&%
$2
$&%
\checkmark\\%
\rule{0pt}{4ex}2&$\begin{array}{l}
(t{-}1%
)^{2}(t{+}1%
)^{2}\\
t^{4}{+}2t^{3}{+}3t^{2}{+}2t{+}1%
\end{array}$%
&%
$ 3 $&%
$2^2$&%
$2{\cdot}3^2$&%
$2^73^45^2$&%
$2
$&%
\checkmark\\%
\rule{0pt}{4ex}3&$\begin{array}{l}
(t{-}1%
)^{2}(t{+}1%
)^{2}\\
(t^{2}{+}1%
)(t^{2}{+}t{+}1%
)\end{array}$%
&%
$ 2, 3 $&%
$3^2$&%
$2^43^2$&%
$2^{10}3^35^2$&%
$1
$&%
\checkmark\\%
\rule{0pt}{4ex}4&$\begin{array}{l}
(t{-}1%
)^{2}(t{+}1%
)^{2}\\
t^{4}{+}t^{3}{+}t^{2}{+}t{+}1%
\end{array}$%
&%
$ 5 $&%
$5$&%
$2{\cdot}5^2$&%
$2^53^25{\cdot}13$&%
$1
$&%
\checkmark\\%
\rule{0pt}{4ex}5&$\begin{array}{l}
t^{4}{-}2t^{3}{+}3t^{2}{-}2t{+}1%
\\
(t{-}1%
)^{2}(t{+}1%
)^{2}\end{array}$%
&%
$ 3 $&%
$2^2$&%
$2{\cdot}3^2$&%
$2^73^45^2$&%
$2
$&%
\checkmark\\%
\rule{0pt}{4ex}6&$\begin{array}{l}
(t^{2}{-}t{+}1%
)(t^{2}{+}1%
)\\
(t{-}1%
)^{2}(t{+}1%
)^{2}\end{array}$%
&%
$ 2, 3 $&%
$3^2$&%
$2^43^2$&%
$2^{10}3^35^2$&%
$1
$&%
\checkmark\\%
\rule{0pt}{4ex}7&$\begin{array}{l}
t^{4}{-}t^{3}{+}t^{2}{-}t{+}1%
\\
(t{-}1%
)^{2}(t{+}1%
)^{2}\end{array}$%
&%
$ 5 $&%
$5$&%
$2{\cdot}5^2$&%
$2^43^25{\cdot}13$&%
$1
$&%
\checkmark\\%
\rule{0pt}{4ex}8&$\begin{array}{l}
t^{4}{+}2t^{3}{+}3t^{2}{+}2t{+}1%
\\
t^{4}{+}4t^{3}{+}6t^{2}{+}4t{+}1%
\end{array}$%
&%
$ 1 $&%
$1$&%
$2$&%
$2{\cdot}5$&%
$2
$&%
\checkmark\\%
\rule{0pt}{4ex}9&$\begin{array}{l}
(t{-}1%
)^{2}(t^{2}{+}t{+}1%
)\\
t^{4}{+}2t^{2}{+}1%
\end{array}$%
&%
$ 2 $&%
$2{\cdot}3$&%
$2^4$&%
$2^63^25$&%
$1
$&%
\checkmark\\%
\rule{0pt}{4ex}10&$\begin{array}{l}
(t{-}1%
)^{2}(t^{2}{+}t{+}1%
)\\
t^{4}{+}t^{3}{+}t^{2}{+}t{+}1%
\end{array}$%
&%
$ 2, 5 $&%
$2^33{\cdot}5^2$&%
$2^35^2$&%
$2^83^35^213$&%
$2
$&%
\checkmark\\%
\rule{0pt}{4ex}11&$\begin{array}{l}
t^{4}{-}2t^{3}{+}3t^{2}{-}2t{+}1%
\\
(t{-}1%
)^{2}(t^{2}{+}t{+}1%
)\end{array}$%
&%
$ 2 $&%
$3^2$&%
$2^4$&%
$2^83^25$&%
$1
$&%
\checkmark\\%
\rule{0pt}{4ex}12&$\begin{array}{l}
(t{-}1%
)^{2}(t^{2}{+}t{+}1%
)\\
(t{+}1%
)^{2}(t^{2}{-}t{+}1%
)\end{array}$%
&%
$ 2 $&%
$2$&%
$2^4$&%
$2^{11}3{\cdot}5$&%
$2
$&%
\checkmark\\%
\rule{0pt}{4ex}13&$\begin{array}{l}
(t{-}1%
)^{2}(t^{2}{+}t{+}1%
)\\
(t^{2}{-}t{+}1%
)(t^{2}{+}1%
)\end{array}$%
&%
$ 1 $&%
$1$&%
$2^2$&%
$2^63{\cdot}5$&%
$2
$&%
\checkmark\\%
\rule{0pt}{4ex}14&$\begin{array}{l}
(t{-}1%
)^{2}(t^{2}{+}t{+}1%
)\\
t^{4}{+}1%
\end{array}$%
&%
$ 2 $&%
$2$&%
$2^3$&%
$2^23{\cdot}5$&%
$1
$&%
\checkmark\\%
\rule{0pt}{4ex}15&$\begin{array}{l}
(t{-}1%
)^{2}(t^{2}{+}t{+}1%
)\\
t^{4}{-}t^{3}{+}t^{2}{-}t{+}1%
\end{array}$%
&%
$ 1 $&%
$1$&%
$2$&%
$2{\cdot}3$&%
$1
$&%
\checkmark\\%
\rule{0pt}{4ex}16&$\begin{array}{l}
(t{-}1%
)^{2}(t^{2}{+}t{+}1%
)\\
t^{4}{-}t^{2}{+}1%
\end{array}$%
&%
$ 2 $&%
$3$&%
$2^3$&%
$2^33{\cdot}5$&%
$1
$&%
\checkmark\\%
\rule{0pt}{4ex}17&$\begin{array}{l}
t^{4}{+}2t^{2}{+}1%
\\
t^{4}{+}2t^{3}{+}3t^{2}{+}2t{+}1%
\end{array}$%
&%
$ 1 $&%
$1$&%
$2$&%
$2{\cdot}5$&%
$2
$&%
\checkmark\\%
\rule{0pt}{4ex}18&$\begin{array}{l}
(t{+}1%
)^{2}(t^{2}{+}1%
)\\
t^{4}{+}2t^{3}{+}3t^{2}{+}2t{+}1%
\end{array}$%
&%
$ 1 $&%
$1$&%
$2$&%
$2{\cdot}5$&%
$1
$&%
\checkmark\\%
\rule{0pt}{4ex}19&$\begin{array}{l}
t^{4}{+}t^{3}{+}t^{2}{+}t{+}1%
\\
t^{4}{+}2t^{3}{+}3t^{2}{+}2t{+}1%
\end{array}$%
&%
$ 1 $&%
$1$&%
$1$&%
$1$&%
$1
$&%
\checkmark\\%
\rule{0pt}{4ex}20&$\begin{array}{l}
(t{+}1%
)^{2}(t^{2}{-}t{+}1%
)\\
t^{4}{+}2t^{3}{+}3t^{2}{+}2t{+}1%
\end{array}$%
&%
$ 2 $&%
$3^2$&%
$2^4$&%
$2^73^25$&%
$1
$&%
\checkmark\\%
\rule{0pt}{4ex}21&$\begin{array}{l}
t^{4}{+}1%
\\
t^{4}{+}2t^{3}{+}3t^{2}{+}2t{+}1%
\end{array}$%
&%
$ 1 $&%
$1$&%
$2$&%
$2{\cdot}5$&%
$2
$&%
\checkmark\\%
\rule{0pt}{4ex}22&$\begin{array}{l}
t^{4}{-}t^{2}{+}1%
\\
t^{4}{+}2t^{3}{+}3t^{2}{+}2t{+}1%
\end{array}$%
&%
$ 1 $&%
$1$&%
$2^2$&%
$2^63{\cdot}5$&%
$2
$&%
\checkmark\\%
\rule{0pt}{4ex}23&$\begin{array}{l}
t^{4}{+}t^{3}{+}t^{2}{+}t{+}1%
\\
(t{+}1%
)^{2}(t^{2}{+}t{+}1%
)\end{array}$%
&%
$ 1 $&%
$1$&%
$2$&%
$2{\cdot}3$&%
$2
$&%
\checkmark\\%
\rule{0pt}{4ex}24&$\begin{array}{l}
(t{-}1%
)^{2}(t^{2}{+}1%
)\\
t^{4}{-}2t^{3}{+}3t^{2}{-}2t{+}1%
\end{array}$%
&%
$ 1 $&%
$1$&%
$2$&%
$2{\cdot}5$&%
$1
$&%
\checkmark\\%
\rule{0pt}{4ex}25&$\begin{array}{l}
(t{-}1%
)^{2}(t^{2}{+}1%
)\\
(t^{2}{-}t{+}1%
)(t^{2}{+}t{+}1%
)\end{array}$%
&%
$ 3 $&%
$3$&%
$2{\cdot}3^2$&%
$2^53{\cdot}5^2$&%
$2
$&%
\checkmark\\%
\rule{0pt}{4ex}26&$\begin{array}{l}
(t{-}1%
)^{2}(t^{2}{+}1%
)\\
t^{4}{+}1%
\end{array}$%
&%
$ 1 $&%
$1$&%
$2^2$&%
$2^73^25$&%
$2
$&%
\checkmark\\%
\rule{0pt}{4ex}27&$\begin{array}{l}
(t{-}1%
)^{2}(t^{2}{+}1%
)\\
t^{4}{-}t^{3}{+}t^{2}{-}t{+}1%
\end{array}$%
&%
$ 1 $&%
$1$&%
$2$&%
$2{\cdot}3$&%
$1
$&%
\checkmark\\%
\rule{0pt}{4ex}28&$\begin{array}{l}
(t{-}1%
)^{2}(t^{2}{+}1%
)\\
t^{4}{-}t^{2}{+}1%
\end{array}$%
&%
$ 3 $&%
$2$&%
$2{\cdot}3^2$&%
$2^53{\cdot}5^2$&%
$2
$&%
\checkmark\\%
\rule{0pt}{4ex}29&$\begin{array}{l}
t^{4}{+}2t^{2}{+}1%
\\
t^{4}{+}t^{3}{+}t^{2}{+}t{+}1%
\end{array}$%
&%
$ 1 $&%
$1$&%
$2$&%
$2{\cdot}3$&%
$1
$&%
\checkmark\\%
\rule{0pt}{4ex}30&$\begin{array}{l}
t^{4}{-}2t^{3}{+}3t^{2}{-}2t{+}1%
\\
t^{4}{+}2t^{2}{+}1%
\end{array}$%
&%
$ 1 $&%
$1$&%
$2$&%
$2{\cdot}5$&%
$2
$&%
\checkmark\\%
\rule{0pt}{4ex}31&$\begin{array}{l}
t^{4}{+}2t^{2}{+}1%
\\
(t{+}1%
)^{2}(t^{2}{-}t{+}1%
)\end{array}$%
&%
$ 2 $&%
$2{\cdot}3$&%
$2^4$&%
$2^63^25$&%
$1
$&%
\checkmark\\%
\rule{0pt}{4ex}32&$\begin{array}{l}
t^{4}{-}t^{3}{+}t^{2}{-}t{+}1%
\\
t^{4}{+}2t^{2}{+}1%
\end{array}$%
&%
$ 1 $&%
$1$&%
$2$&%
$2{\cdot}3$&%
$1
$&%
\checkmark\\%
\rule{0pt}{4ex}33&$\begin{array}{l}
t^{4}{+}t^{3}{+}t^{2}{+}t{+}1%
\\
(t{+}1%
)^{2}(t^{2}{+}1%
)\end{array}$%
&%
$ 1 $&%
$1$&%
$2$&%
$2{\cdot}3$&%
$1
$&%
\checkmark\\%
\rule{0pt}{4ex}34&$\begin{array}{l}
(t^{2}{-}t{+}1%
)(t^{2}{+}t{+}1%
)\\
(t{+}1%
)^{2}(t^{2}{+}1%
)\end{array}$%
&%
$ 3 $&%
$3$&%
$2{\cdot}3^2$&%
$2^53{\cdot}5^2$&%
$2
$&%
\checkmark\\%
\rule{0pt}{4ex}35&$\begin{array}{l}
t^{4}{+}1%
\\
(t{+}1%
)^{2}(t^{2}{+}1%
)\end{array}$%
&%
$ 1 $&%
$1$&%
$2^2$&%
$2^73^25$&%
$2
$&%
\checkmark\\%
\rule{0pt}{4ex}36&$\begin{array}{l}
t^{4}{-}t^{2}{+}1%
\\
(t{+}1%
)^{2}(t^{2}{+}1%
)\end{array}$%
&%
$ 3 $&%
$2$&%
$2{\cdot}3^2$&%
$2^53{\cdot}5^2$&%
$2
$&%
\checkmark\\%
\rule{0pt}{4ex}37&$\begin{array}{l}
t^{4}{+}t^{3}{+}t^{2}{+}t{+}1%
\\
(t^{2}{+}1%
)(t^{2}{+}t{+}1%
)\end{array}$%
&%
$ 1 $&%
$1$&%
$2$&%
$2{\cdot}3$&%
$1
$&%
\checkmark\\%
\rule{0pt}{4ex}38&$\begin{array}{l}
(t{+}1%
)^{2}(t^{2}{-}t{+}1%
)\\
(t^{2}{+}1%
)(t^{2}{+}t{+}1%
)\end{array}$%
&%
$ 1 $&%
$1$&%
$2^2$&%
$2^63{\cdot}5$&%
$2
$&%
\checkmark\\%
\rule{0pt}{4ex}39&$\begin{array}{l}
t^{4}{+}1%
\\
(t^{2}{+}1%
)(t^{2}{+}t{+}1%
)\end{array}$%
&%
$ 1 $&%
$1$&%
$2^3$&%
$2^23{\cdot}5$&%
$1
$&%
\checkmark\\%
\rule{0pt}{4ex}40&$\begin{array}{l}
t^{4}{-}t^{3}{+}t^{2}{-}t{+}1%
\\
(t^{2}{+}1%
)(t^{2}{+}t{+}1%
)\end{array}$%
&%
$ 1 $&%
$1$&%
$2$&%
$2{\cdot}3$&%
$2
$&%
\checkmark\\%
\rule{0pt}{4ex}41&$\begin{array}{l}
t^{4}{-}t^{2}{+}1%
\\
(t^{2}{+}1%
)(t^{2}{+}t{+}1%
)\end{array}$%
&%
$ 2, 3 $&%
$2{\cdot}3$&%
$2^33^2$&%
$2^63^25^2$&%
$1
$&%
\checkmark\\%
\rule{0pt}{4ex}42&$\begin{array}{l}
(t{+}1%
)^{2}(t^{2}{-}t{+}1%
)\\
t^{4}{+}t^{3}{+}t^{2}{+}t{+}1%
\end{array}$%
&%
$ 1 $&%
$1$&%
$2$&%
$2{\cdot}3$&%
$1
$&%
\checkmark\\%
\rule{0pt}{4ex}43&$\begin{array}{l}
(t^{2}{-}t{+}1%
)(t^{2}{+}t{+}1%
)\\
t^{4}{+}t^{3}{+}t^{2}{+}t{+}1%
\end{array}$%
&%
$ 1 $&%
$1$&%
$1$&%
$1$&%
$1
$&%
\checkmark\\%
\rule{0pt}{4ex}44&$\begin{array}{l}
(t^{2}{-}t{+}1%
)(t^{2}{+}1%
)\\
t^{4}{+}t^{3}{+}t^{2}{+}t{+}1%
\end{array}$%
&%
$ 1 $&%
$1$&%
$2$&%
$2{\cdot}3$&%
$2
$&%
\checkmark\\%
\rule{0pt}{4ex}45&$\begin{array}{l}
t^{4}{+}1%
\\
t^{4}{+}t^{3}{+}t^{2}{+}t{+}1%
\end{array}$%
&%
$ 1 $&%
$1$&%
$2$&%
$2{\cdot}3$&%
$1
$&%
\checkmark\\%
\rule{0pt}{4ex}46&$\begin{array}{l}
t^{4}{-}t^{3}{+}t^{2}{-}t{+}1%
\\
t^{4}{+}t^{3}{+}t^{2}{+}t{+}1%
\end{array}$%
&%
$ 1 $&%
$1$&%
$2^2$&%
$2^93^2$&%
$2
$&%
\checkmark\\%
\rule{0pt}{4ex}47&$\begin{array}{l}
t^{4}{-}t^{2}{+}1%
\\
t^{4}{+}t^{3}{+}t^{2}{+}t{+}1%
\end{array}$%
&%
$ 1 $&%
$1$&%
$1$&%
$1$&%
$1
$&%
\checkmark\\%
\rule{0pt}{4ex}48&$\begin{array}{l}
(t{-}1%
)^{2}(t^{2}{-}t{+}1%
)\\
t^{4}{-}t^{3}{+}t^{2}{-}t{+}1%
\end{array}$%
&%
$ 1 $&%
$1$&%
$2$&%
$2{\cdot}3$&%
$2
$&%
\checkmark\\%
\rule{0pt}{4ex}49&$\begin{array}{l}
t^{4}{-}2t^{3}{+}3t^{2}{-}2t{+}1%
\\
t^{4}{+}1%
\end{array}$%
&%
$ 1 $&%
$1$&%
$2$&%
$2{\cdot}5$&%
$2
$&%
\checkmark\\%
\rule{0pt}{4ex}50&$\begin{array}{l}
t^{4}{-}2t^{3}{+}3t^{2}{-}2t{+}1%
\\
t^{4}{-}t^{3}{+}t^{2}{-}t{+}1%
\end{array}$%
&%
$ 1 $&%
$1$&%
$1$&%
$1$&%
$1
$&%
\checkmark\\%
\rule{0pt}{4ex}51&$\begin{array}{l}
t^{4}{-}2t^{3}{+}3t^{2}{-}2t{+}1%
\\
t^{4}{-}t^{2}{+}1%
\end{array}$%
&%
$ 1 $&%
$1$&%
$2^2$&%
$2^63{\cdot}5$&%
$2
$&%
\checkmark\\%
\rule{0pt}{4ex}52&$\begin{array}{l}
t^{4}{+}1%
\\
(t{+}1%
)^{2}(t^{2}{-}t{+}1%
)\end{array}$%
&%
$ 2 $&%
$2$&%
$2^3$&%
$2^23{\cdot}5$&%
$1
$&%
\checkmark\\%
\rule{0pt}{4ex}53&$\begin{array}{l}
t^{4}{-}t^{3}{+}t^{2}{-}t{+}1%
\\
(t{+}1%
)^{2}(t^{2}{-}t{+}1%
)\end{array}$%
&%
$ 5 $&%
$5$&%
$2{\cdot}5^2$&%
$2^43^25{\cdot}13$&%
$2
$&%
\checkmark\\%
\rule{0pt}{4ex}54&$\begin{array}{l}
t^{4}{-}t^{2}{+}1%
\\
(t{+}1%
)^{2}(t^{2}{-}t{+}1%
)\end{array}$%
&%
$ 2 $&%
$3$&%
$2^3$&%
$2^33{\cdot}5$&%
$1
$&%
\checkmark\\%
\rule{0pt}{4ex}55&$\begin{array}{l}
t^{4}{-}t^{3}{+}t^{2}{-}t{+}1%
\\
(t^{2}{-}t{+}1%
)(t^{2}{+}t{+}1%
)\end{array}$%
&%
$ 1 $&%
$1$&%
$1$&%
$1$&%
$1
$&%
\checkmark\\%
\rule{0pt}{4ex}56&$\begin{array}{l}
(t^{2}{-}t{+}1%
)(t^{2}{+}1%
)\\
t^{4}{+}1%
\end{array}$%
&%
$ 1 $&%
$1$&%
$2^3$&%
$2^23{\cdot}5$&%
$1
$&%
\checkmark\\%
\rule{0pt}{4ex}57&$\begin{array}{l}
t^{4}{-}t^{3}{+}t^{2}{-}t{+}1%
\\
(t^{2}{-}t{+}1%
)(t^{2}{+}1%
)\end{array}$%
&%
$ 1 $&%
$1$&%
$2$&%
$2{\cdot}3$&%
$1
$&%
\checkmark\\%
\rule{0pt}{4ex}58&$\begin{array}{l}
(t^{2}{-}t{+}1%
)(t^{2}{+}1%
)\\
t^{4}{-}t^{2}{+}1%
\end{array}$%
&%
$ 2, 3 $&%
$2{\cdot}3$&%
$2^33^2$&%
$2^63^25^2$&%
$1
$&%
\checkmark\\%
\rule{0pt}{4ex}59&$\begin{array}{l}
t^{4}{-}t^{3}{+}t^{2}{-}t{+}1%
\\
t^{4}{+}1%
\end{array}$%
&%
$ 1 $&%
$1$&%
$2$&%
$2{\cdot}3$&%
$1
$&%
\checkmark\\%
\rule{0pt}{4ex}60&$\begin{array}{l}
t^{4}{-}t^{3}{+}t^{2}{-}t{+}1%
\\
t^{4}{-}t^{2}{+}1%
\end{array}$%
&%
$ 1 $&%
$1$&%
$1$&%
$1$&%
$1
$&%
\checkmark\\%
\rule{0pt}{4ex}101&$\begin{array}{l}
t^{4}{-}4t^{3}{+}6t^{2}{-}4t{+}1%
\\
t^{4}{+}4t^{3}{+}6t^{2}{+}4t{+}1%
\end{array}$%
&%
$ 1 $&%
$1$&%
$2^{10}$&%
$2^{40}3^25$&%
$8
$&%
T\\%
\rule{0pt}{4ex}102&$\begin{array}{l}
t^{4}{-}4t^{3}{+}6t^{2}{-}4t{+}1%
\\
t^{4}{+}2t^{3}{+}3t^{2}{+}2t{+}1%
\end{array}$%
&%
$ 1 $&%
$1$&%
$2{\cdot}3^5$&%
$2^83^{14}5^2$&%
$6
$&%
---\\%
\rule{0pt}{4ex}103&$\begin{array}{l}
t^{4}{-}4t^{3}{+}6t^{2}{-}4t{+}1%
\\
(t{+}1%
)^{2}(t^{2}{+}t{+}1%
)\end{array}$%
&%
$ 1 $&%
$1$&%
$2^53^2$&%
$2^{17}3^65^2$&%
$7
$&%
T\\%
\rule{0pt}{4ex}104&$\begin{array}{l}
t^{4}{-}4t^{3}{+}6t^{2}{-}4t{+}1%
\\
t^{4}{+}2t^{2}{+}1%
\end{array}$%
&%
$ 1 $&%
$1$&%
$2^6$&%
$2^{20}3^25$&%
$4
$&%
\checkmark\\%
\rule{0pt}{4ex}105&$\begin{array}{l}
t^{4}{-}4t^{3}{+}6t^{2}{-}4t{+}1%
\\
(t{+}1%
)^{2}(t^{2}{+}1%
)\end{array}$%
&%
$ 1 $&%
$1$&%
$2^7$&%
$2^{24}3^25$&%
$6
$&%
T\\%
\rule{0pt}{4ex}106&$\begin{array}{l}
t^{4}{-}4t^{3}{+}6t^{2}{-}4t{+}1%
\\
(t^{2}{+}1%
)(t^{2}{+}t{+}1%
)\end{array}$%
&%
$ 1 $&%
$1$&%
$2^33^2$&%
$2^{10}3^65^2$&%
$5
$&%
\checkmark\\%
\rule{0pt}{4ex}107&$\begin{array}{l}
t^{4}{-}4t^{3}{+}6t^{2}{-}4t{+}1%
\\
t^{4}{+}t^{3}{+}t^{2}{+}t{+}1%
\end{array}$%
&%
$ 1 $&%
$1$&%
$2{\cdot}5^3$&%
$2^83^35^813$&%
$5
$&%
T\\%
\rule{0pt}{4ex}108&$\begin{array}{l}
t^{4}{-}4t^{3}{+}6t^{2}{-}4t{+}1%
\\
(t{+}1%
)^{2}(t^{2}{-}t{+}1%
)\end{array}$%
&%
$ 1 $&%
$1$&%
$2^5$&%
$2^{13}3{\cdot}5$&%
$5
$&%
T\\%
\rule{0pt}{4ex}109&$\begin{array}{l}
t^{4}{-}4t^{3}{+}6t^{2}{-}4t{+}1%
\\
(t^{2}{-}t{+}1%
)(t^{2}{+}t{+}1%
)\end{array}$%
&%
$ 1 $&%
$1$&%
$2^23^2$&%
$2^93^55^2$&%
$4
$&%
\checkmark\\%
\rule{0pt}{4ex}110&$\begin{array}{l}
t^{4}{-}4t^{3}{+}6t^{2}{-}4t{+}1%
\\
(t^{2}{-}t{+}1%
)(t^{2}{+}1%
)\end{array}$%
&%
$ 1 $&%
$1$&%
$2^3$&%
$2^63{\cdot}5$&%
$3
$&%
\checkmark\\%
\rule{0pt}{4ex}111&$\begin{array}{l}
t^{4}{-}4t^{3}{+}6t^{2}{-}4t{+}1%
\\
t^{4}{+}1%
\end{array}$%
&%
$ 1 $&%
$1$&%
$2^4$&%
$2^{11}3^25$&%
$4
$&%
T\\%
\rule{0pt}{4ex}112&$\begin{array}{l}
t^{4}{-}4t^{3}{+}6t^{2}{-}4t{+}1%
\\
t^{4}{-}t^{3}{+}t^{2}{-}t{+}1%
\end{array}$%
&%
$ 1 $&%
$1$&%
$2$&%
$2{\cdot}3$&%
$3
$&%
\checkmark\\%
\rule{0pt}{4ex}113&$\begin{array}{l}
t^{4}{-}4t^{3}{+}6t^{2}{-}4t{+}1%
\\
t^{4}{-}t^{2}{+}1%
\end{array}$%
&%
$ 1 $&%
$1$&%
$2^2$&%
$2^55$&%
$4
$&%
T\\%
\rule{0pt}{4ex}114&$\begin{array}{l}
(t{-}1%
)^{2}(t^{2}{+}t{+}1%
)\\
t^{4}{+}4t^{3}{+}6t^{2}{+}4t{+}1%
\end{array}$%
&%
$ 1 $&%
$1$&%
$2^5$&%
$2^{13}3{\cdot}5$&%
$5
$&%
T\\%
\rule{0pt}{4ex}115&$\begin{array}{l}
(t{-}1%
)^{2}(t^{2}{+}1%
)\\
t^{4}{+}4t^{3}{+}6t^{2}{+}4t{+}1%
\end{array}$%
&%
$ 2 $&%
$2^3$&%
$2^8$&%
$2^{27}3^25$&%
$6
$&%
T\\%
\rule{0pt}{4ex}116&$\begin{array}{l}
t^{4}{+}2t^{2}{+}1%
\\
t^{4}{+}4t^{3}{+}6t^{2}{+}4t{+}1%
\end{array}$%
&%
$ 1 $&%
$1$&%
$2^4$&%
$2^{20}3^25$&%
$4
$&%
\checkmark\\%
\rule{0pt}{4ex}117&$\begin{array}{l}
(t^{2}{+}1%
)(t^{2}{+}t{+}1%
)\\
t^{4}{+}4t^{3}{+}6t^{2}{+}4t{+}1%
\end{array}$%
&%
$ 2 $&%
$2{\cdot}3$&%
$2^4$&%
$2^73^25$&%
$3
$&%
\checkmark\\%
\rule{0pt}{4ex}118&$\begin{array}{l}
t^{4}{+}t^{3}{+}t^{2}{+}t{+}1%
\\
t^{4}{+}4t^{3}{+}6t^{2}{+}4t{+}1%
\end{array}$%
&%
$ 1 $&%
$1$&%
$2$&%
$2{\cdot}3$&%
$3
$&%
\checkmark\\%
\rule{0pt}{4ex}119&$\begin{array}{l}
(t{-}1%
)^{2}(t^{2}{-}t{+}1%
)\\
t^{4}{+}4t^{3}{+}6t^{2}{+}4t{+}1%
\end{array}$%
&%
$ 3 $&%
$2^2$&%
$2^53^3$&%
$2^{19}3^65^2$&%
$7
$&%
T\\%
\rule{0pt}{4ex}120&$\begin{array}{l}
t^{4}{-}2t^{3}{+}3t^{2}{-}2t{+}1%
\\
t^{4}{+}4t^{3}{+}6t^{2}{+}4t{+}1%
\end{array}$%
&%
$ 1 $&%
$1$&%
$2{\cdot}3^4$&%
$2^83^{14}5^2$&%
$6
$&%
---\\%
\rule{0pt}{4ex}121&$\begin{array}{l}
(t^{2}{-}t{+}1%
)(t^{2}{+}t{+}1%
)\\
t^{4}{+}4t^{3}{+}6t^{2}{+}4t{+}1%
\end{array}$%
&%
$ 3 $&%
$2^23$&%
$2^23^3$&%
$2^{11}3^65^2$&%
$4
$&%
\checkmark\\%
\rule{0pt}{4ex}122&$\begin{array}{l}
(t^{2}{-}t{+}1%
)(t^{2}{+}1%
)\\
t^{4}{+}4t^{3}{+}6t^{2}{+}4t{+}1%
\end{array}$%
&%
$ 2, 3 $&%
$2^33$&%
$2^43^3$&%
$2^{13}3^75^2$&%
$5
$&%
---\\%
\rule{0pt}{4ex}123&$\begin{array}{l}
t^{4}{+}1%
\\
t^{4}{+}4t^{3}{+}6t^{2}{+}4t{+}1%
\end{array}$%
&%
$ 1 $&%
$1$&%
$2^3$&%
$2^{11}3^25$&%
$4
$&%
T\\%
\rule{0pt}{4ex}124&$\begin{array}{l}
t^{4}{-}t^{3}{+}t^{2}{-}t{+}1%
\\
t^{4}{+}4t^{3}{+}6t^{2}{+}4t{+}1%
\end{array}$%
&%
$ 1 $&%
$1$&%
$2{\cdot}5^2$&%
$2^73^35^813$&%
$5
$&%
T\\%
\rule{0pt}{4ex}125&$\begin{array}{l}
t^{4}{-}t^{2}{+}1%
\\
t^{4}{+}4t^{3}{+}6t^{2}{+}4t{+}1%
\end{array}$%
&%
$ 1 $&%
$1$&%
$2^2$&%
$2^55$&%
$4
$&%
T\\%
\rule{0pt}{4ex}126&$\begin{array}{l}
(t{-}1%
)^{2}(t^{2}{+}t{+}1%
)\\
(t{+}1%
)^{2}(t^{2}{+}1%
)\end{array}$%
&%
$ 2 $&%
$2^23$&%
$2^6$&%
$2^{11}3^25$&%
$3
$&%
\checkmark\\%
\rule{0pt}{4ex}127&$\begin{array}{l}
(t{-}1%
)^{2}(t^{2}{+}1%
)\\
t^{4}{+}2t^{3}{+}3t^{2}{+}2t{+}1%
\end{array}$%
&%
$ 3 $&%
$2^2$&%
$2{\cdot}3^2$&%
$2^73^45^2$&%
$4
$&%
\checkmark\\%
\rule{0pt}{4ex}128&$\begin{array}{l}
(t{-}1%
)^{2}(t^{2}{-}t{+}1%
)\\
t^{4}{+}2t^{3}{+}3t^{2}{+}2t{+}1%
\end{array}$%
&%
$ 2, 3 $&%
$2^23^2$&%
$2^43^3$&%
$2^{13}3^75^2$&%
$5
$&%
---\\%
\rule{0pt}{4ex}129&$\begin{array}{l}
t^{4}{-}2t^{3}{+}3t^{2}{-}2t{+}1%
\\
t^{4}{+}2t^{3}{+}3t^{2}{+}2t{+}1%
\end{array}$%
&%
$ 1 $&%
$1$&%
$2^4$&%
$2^{16}3{\cdot}5$&%
$4
$&%
\checkmark\\%
\rule{0pt}{4ex}130&$\begin{array}{l}
(t^{2}{-}t{+}1%
)(t^{2}{+}1%
)\\
t^{4}{+}2t^{3}{+}3t^{2}{+}2t{+}1%
\end{array}$%
&%
$ 2 $&%
$3^2$&%
$2^4$&%
$2^73^25$&%
$3
$&%
\checkmark\\%
\rule{0pt}{4ex}131&$\begin{array}{l}
t^{4}{-}t^{3}{+}t^{2}{-}t{+}1%
\\
t^{4}{+}2t^{3}{+}3t^{2}{+}2t{+}1%
\end{array}$%
&%
$ 1 $&%
$1$&%
$1$&%
$1$&%
$3
$&%
\checkmark\\%
\rule{0pt}{4ex}132&$\begin{array}{l}
(t{-}1%
)^{2}(t^{2}{+}1%
)\\
(t{+}1%
)^{2}(t^{2}{+}t{+}1%
)\end{array}$%
&%
$ 2, 3 $&%
$2{\cdot}3$&%
$2^53^2$&%
$2^{14}3^25^2$&%
$5
$&%
---\\%
\rule{0pt}{4ex}133&$\begin{array}{l}
t^{4}{+}2t^{2}{+}1%
\\
(t{+}1%
)^{2}(t^{2}{+}t{+}1%
)\end{array}$%
&%
$ 2 $&%
$2{\cdot}3$&%
$2^43$&%
$2^73^45^2$&%
$3
$&%
\checkmark\\%
\rule{0pt}{4ex}134&$\begin{array}{l}
(t{-}1%
)^{2}(t^{2}{-}t{+}1%
)\\
(t{+}1%
)^{2}(t^{2}{+}t{+}1%
)\end{array}$%
&%
$ 2 $&%
$2$&%
$2^43^2$&%
$2^{18}3^85^2$&%
$6
$&%
---\\%
\rule{0pt}{4ex}135&$\begin{array}{l}
t^{4}{-}2t^{3}{+}3t^{2}{-}2t{+}1%
\\
(t{+}1%
)^{2}(t^{2}{+}t{+}1%
)\end{array}$%
&%
$ 2, 3 $&%
$2^23^2$&%
$2^43^2$&%
$2^{13}3^75^2$&%
$5
$&%
---\\%
\rule{0pt}{4ex}136&$\begin{array}{l}
(t{+}1%
)^{2}(t^{2}{+}t{+}1%
)\\
(t^{2}{-}t{+}1%
)(t^{2}{+}1%
)\end{array}$%
&%
$ 3 $&%
$3$&%
$2^23^2$&%
$2^{10}3^35^2$&%
$4
$&%
\checkmark\\%
\rule{0pt}{4ex}137&$\begin{array}{l}
(t{+}1%
)^{2}(t^{2}{+}t{+}1%
)\\
t^{4}{+}1%
\end{array}$%
&%
$ 1 $&%
$1$&%
$2^23$&%
$2^23^35^2$&%
$3
$&%
---\\%
\rule{0pt}{4ex}138&$\begin{array}{l}
t^{4}{-}t^{3}{+}t^{2}{-}t{+}1%
\\
(t{+}1%
)^{2}(t^{2}{+}t{+}1%
)\end{array}$%
&%
$ 5 $&%
$5$&%
$2{\cdot}5^2$&%
$2^43^25{\cdot}13$&%
$4
$&%
---\\%
\rule{0pt}{4ex}139&$\begin{array}{l}
t^{4}{-}t^{2}{+}1%
\\
(t{+}1%
)^{2}(t^{2}{+}t{+}1%
)\end{array}$%
&%
$ 2 $&%
$3$&%
$2^23$&%
$2^33^35^2$&%
$3
$&%
\checkmark\\%
\rule{0pt}{4ex}140&$\begin{array}{l}
(t{-}1%
)^{2}(t^{2}{+}1%
)\\
t^{4}{+}t^{3}{+}t^{2}{+}t{+}1%
\end{array}$%
&%
$ 2, 5 $&%
$2^33{\cdot}5^2$&%
$2^35^2$&%
$2^83^35^213$&%
$3
$&%
---\\%
\rule{0pt}{4ex}141&$\begin{array}{l}
(t{-}1%
)^{2}(t^{2}{+}1%
)\\
(t{+}1%
)^{2}(t^{2}{-}t{+}1%
)\end{array}$%
&%
$ 2 $&%
$3$&%
$2^3$&%
$2^93^25$&%
$3
$&%
\checkmark\\%
\rule{0pt}{4ex}142&$\begin{array}{l}
(t{-}1%
)^{2}(t^{2}{-}t{+}1%
)\\
t^{4}{+}2t^{2}{+}1%
\end{array}$%
&%
$ 2 $&%
$2{\cdot}3$&%
$2^43$&%
$2^73^45^2$&%
$3
$&%
\checkmark\\%
\rule{0pt}{4ex}143&$\begin{array}{l}
(t{-}1%
)^{2}(t^{2}{-}t{+}1%
)\\
(t{+}1%
)^{2}(t^{2}{+}1%
)\end{array}$%
&%
$ 3 $&%
$3$&%
$2^33^2$&%
$2^{13}3^25^2$&%
$5
$&%
---\\%
\rule{0pt}{4ex}144&$\begin{array}{l}
t^{4}{-}2t^{3}{+}3t^{2}{-}2t{+}1%
\\
(t{+}1%
)^{2}(t^{2}{+}1%
)\end{array}$%
&%
$ 3 $&%
$2^2$&%
$2{\cdot}3^2$&%
$2^73^45^2$&%
$4
$&%
\checkmark\\%
\rule{0pt}{4ex}145&$\begin{array}{l}
t^{4}{-}t^{3}{+}t^{2}{-}t{+}1%
\\
(t{+}1%
)^{2}(t^{2}{+}1%
)\end{array}$%
&%
$ 2, 5 $&%
$2^33{\cdot}5$&%
$2^25$&%
$2^73^35{\cdot}13$&%
$3
$&%
---\\%
\rule{0pt}{4ex}146&$\begin{array}{l}
(t{-}1%
)^{2}(t^{2}{-}t{+}1%
)\\
(t^{2}{+}1%
)(t^{2}{+}t{+}1%
)\end{array}$%
&%
$ 3 $&%
$3$&%
$2^23^2$&%
$2^{10}3^35^2$&%
$4
$&%
\checkmark\\%
\rule{0pt}{4ex}147&$\begin{array}{l}
t^{4}{-}2t^{3}{+}3t^{2}{-}2t{+}1%
\\
(t^{2}{+}1%
)(t^{2}{+}t{+}1%
)\end{array}$%
&%
$ 2 $&%
$3^2$&%
$2^4$&%
$2^83^25$&%
$3
$&%
\checkmark\\%
\rule{0pt}{4ex}148&$\begin{array}{l}
(t{-}1%
)^{2}(t^{2}{-}t{+}1%
)\\
t^{4}{+}t^{3}{+}t^{2}{+}t{+}1%
\end{array}$%
&%
$ 5 $&%
$5$&%
$2{\cdot}5^2$&%
$2^53^25{\cdot}13$&%
$4
$&%
---\\%
\rule{0pt}{4ex}149&$\begin{array}{l}
t^{4}{-}2t^{3}{+}3t^{2}{-}2t{+}1%
\\
t^{4}{+}t^{3}{+}t^{2}{+}t{+}1%
\end{array}$%
&%
$ 1 $&%
$1$&%
$1$&%
$1$&%
$3
$&%
\checkmark\\%
\rule{0pt}{4ex}150&$\begin{array}{l}
(t{-}1%
)^{2}(t^{2}{-}t{+}1%
)\\
t^{4}{+}1%
\end{array}$%
&%
$ 2 $&%
$2$&%
$2^33$&%
$2^33^35^2$&%
$3
$&%
---\\%
\rule{0pt}{4ex}151&$\begin{array}{l}
(t{-}1%
)^{2}(t^{2}{-}t{+}1%
)\\
t^{4}{-}t^{2}{+}1%
\end{array}$%
&%
$ 2 $&%
$3$&%
$2^33$&%
$2^33^35^2$&%
$3
$&%
\checkmark\\%
\caption{Degree $4$}
\label{nequals4}
\end{longtable}

}
%\end{landscape}

\begin{landscape}{\small 
\begin{longtable}{r|l|c|r|r|r|c|c}%
Nr&Polynomials& Mu &Int&iLevel&iIndex&Coeff&Enum\\
\hline
\endhead
\rule{0pt}{4ex}158&$\begin{array}{l}
(t^{2}{-}t{+}1%
)(t^{2}{+}1%
)^{2}\\
(t^{2}{+}t{+}1%
)(t^{4}{-}t^{3}{+}t^{2}{-}t{+}1%
)\end{array}$%
&%
$ 2 $&%
$3$&%
$2^3$&%
$2^43^37$&%
$1
$&%
\checkmark\\%
\rule{0pt}{4ex}162&$\begin{array}{l}
(t{-}1%
)^{2}(t{+}1%
)^{2}(t^{2}{+}1%
)\\
t^{6}{-}t^{3}{+}1%
\end{array}$%
&%
$ 3 $&%
$3$&%
$2{\cdot}3^2$&%
$2^43{\cdot}7^213$&%
$1
$&%
---\\%
\rule{0pt}{4ex}167&$\begin{array}{l}
(t{-}1%
)^{2}(t{+}1%
)^{2}(t^{2}{+}1%
)\\
(t^{2}{+}t{+}1%
)(t^{4}{-}t^{2}{+}1%
)\end{array}$%
&%
$ 3 $&%
$2^2$&%
$2{\cdot}3$&%
$2^93^35{\cdot}7^213$&%
$1
$&%
$\times$\\%
\rule{0pt}{4ex}390&$\begin{array}{l}
t^{6}{-}t^{5}{+}t^{4}{-}t^{3}{+}t^{2}{-}t{+}1%
\\
(t{+}1%
)^{2}(t^{4}{-}t^{3}{+}t^{2}{-}t{+}1%
)\end{array}$%
&%
$ 2, 7 $&%
$2^63{\cdot}5{\cdot}7^2$&%
$2^37^2$&%
$2^{14}3^55{\cdot}7^219{\cdot}43$&%
$2
$&%
$\times$\\%
\rule{0pt}{4ex}394&$\begin{array}{l}
(t{-}1%
)^{2}(t^{2}{-}t{+}1%
)(t^{2}{+}1%
)\\
t^{6}{-}t^{5}{+}t^{4}{-}t^{3}{+}t^{2}{-}t{+}1%
\end{array}$%
&%
$ 2 $&%
$2^33{\cdot}5{\cdot}7$&%
$2^3$&%
$2^83^35{\cdot}7$&%
$2
$&%
\checkmark\\%
\rule{0pt}{4ex}437&$\begin{array}{l}
t^{6}{+}t^{5}{+}t^{4}{+}t^{3}{+}t^{2}{+}t{+}1%
\\
(t{+}1%
)^{2}(t^{2}{+}t{+}1%
)^{2}\end{array}$%
&%
$ 1 $&%
$1$&%
$2$&%
$2^53^2$&%
$3
$&%
---\\%
\rule{0pt}{4ex}468&$\begin{array}{l}
t^{6}{-}t^{5}{+}t^{4}{-}t^{3}{+}t^{2}{-}t{+}1%
\\
(t^{2}{+}t{+}1%
)(t^{4}{+}t^{3}{+}t^{2}{+}t{+}1%
)\end{array}$%
&%
$ 1 $&%
$1$&%
$1$&%
$1$&%
$3
$&%
\checkmark\\%
\rule{0pt}{4ex}534&$\begin{array}{l}
(t^{2}{-}t{+}1%
)(t^{4}{-}t^{3}{+}t^{2}{-}t{+}1%
)\\
t^{6}{+}t^{5}{+}t^{4}{+}t^{3}{+}t^{2}{+}t{+}1%
\end{array}$%
&%
$ 1 $&%
$1$&%
$1$&%
$1$&%
$3
$&%
\checkmark\\%
\rule{0pt}{4ex}774&$\begin{array}{l}
t^{6}{-}6t^{5}{+}15t^{4}{-}20t^{3}{+}15t^{2}{-}6t{+}1%
\\
t^{6}{-}t^{5}{+}t^{4}{-}t^{3}{+}t^{2}{-}t{+}1%
\end{array}$%
&%
$ 1 $&%
$1$&%
$2$&%
$2^23^2$&%
$5
$&%
---\\%
\rule{0pt}{4ex}819&$\begin{array}{l}
t^{6}{-}6t^{5}{+}15t^{4}{-}20t^{3}{+}15t^{2}{-}6t{+}1%
\\
t^{6}{-}t^{3}{+}1%
\end{array}$%
&%
$ 1 $&%
$1$&%
$2{\cdot}3$&%
$2^33^55{\cdot}7^213$&%
$6
$&%
---\\%
\rule{0pt}{4ex}838&$\begin{array}{l}
t^{6}{-}6t^{5}{+}15t^{4}{-}20t^{3}{+}15t^{2}{-}6t{+}1%
\\
t^{6}{+}t^{5}{+}t^{4}{+}t^{3}{+}t^{2}{+}t{+}1%
\end{array}$%
&%
$ 1 $&%
$1$&%
$2{\cdot}7^2$&%
$2^{15}3^65^27^{22}19{\cdot}43$&%
$7
$&%
$\times$\\%
\caption{Degree $6$}
\label{nequals6}
\end{longtable}}
\end{landscape}

\subsection*{Acknowledgments}
We thank Professor Martin Kassabov for his helpful 
advice. Many thanks are due also
to our referees, whose comments led to improvements 
of the paper.
We are grateful to Mathematisches 
Forschungsinstitut Oberwolfach and
the Hausdorff Research Institute for Mathematics
for hospitality and facilitation of our work during 
visits in 2018.
A.~S.~Detinko was supported by Marie Sk\l odowska-Curie 
Individual Fellowship grant H2020 MSCA-IF-2015, 
no.~704910 (EU Framework Programme for Research and 
Innovation). A.~Hulpke was supported by National 
Science Foundation grant DMS-1720146.

\bibliographystyle{amsplain}

\begin{thebibliography}{10}

\bibitem{Babaietal}
L.~Babai, E.~Luks, and \'{A}.~Seress,
\textit{Fast management of permutation groups. I}, 
SIAM J. Comput. \textbf{26} (1997), no.~5, 1310--1342.

\bibitem{Bajpaietal}
J.~Bajpai, S.~Singh, and S.~V.~Singh, \emph{Symplectic 
hypergeometric groups of degree six},
\url{https://arxiv.org/pdf/2003.10191.pdf}

\bibitem{BH}
F.~Beukers and G.~Heckman, 
\textit{Monodromy for the hypergeometric 
function $_nF_{n-1}$}, Invent. Math. \textbf{95} (1989), 
no.~2, 325--354.

\bibitem{BravThomas}
C.~Brav and H.~Thomas, \textit{Thin monodromy in $\Sp(4)$}, 
Compos. Math. \textbf{150} (2014), no.~3, 333--343.

\bibitem{ChenYangYui}
Y.-H.~Chen, Y.~Yang, and N.~Yui,
\textit{Monodromy of Picard--Fuchs differential equations for Calabi--Yau
threefolds},
J. Reine Angew. Math. \textbf{616} (2008), 167--203.

\bibitem{ArithSolvable}
A.~S.~Detinko, D.~L.~Flannery, and W. de Graaf,
\textit{Integrality and arithmeticity of solvable linear groups}, 
J. Symbolic Comput. \textbf{68} (2015), part~1, 138--145.

\bibitem{Arithm}
A.~S. Detinko, D.~L. Flannery, and A.~ Hulpke, 
\textit{Algorithms for arithmetic groups with the congruence 
subgroup property},
J. Algebra \textbf{421} (2015), 234--259. 

\bibitem{Density}
A.~S. Detinko, D.~L. Flannery, and A.~ Hulpke, 
\textit{Zariski density and computing in arithmetic groups}, 
Math. Comp. \textbf{87} (2018), no.~310, 967--986. 

\bibitem{DensityFurther}
A.~S.~Detinko, D.~L.~Flannery, and A.~Hulpke, 
\textit{Algorithms for experimenting with Zariski dense 
subgroups}, Exp. Math., published online June 2018
{\small \url{https://doi.org/10.1080/10586458.2018.1466217}}.

\bibitem{SAT}
A.~S. Detinko, D.~L. Flannery, and A.~ Hulpke, 
\textit{The strong approximation theorem and computing 
with linear groups}, J. Algebra \textbf{529} (2019), 536--549.

\bibitem{doranmorgan}
C.~F.~Doran and J.~W.~Morgan,
\textit{Mirror symmetry and integral variations of Hodge structure
underlying one-parameter families of Calabi--Yau threefolds},
in \textit{Mirror symmetry. V}, 517--537, 
AMS/IP Stud. Adv. Math., \textbf{38}, Amer. Math. Soc., Providence, RI, 2006. 

\bibitem{FuchsMonodromy}
E.~Fuchs, C.~Meiri, and P.~Sarnak, 
\textit{Hyperbolic monodromy groups for 
the hypergeometric equation and Cartan involutions}, 
J. Eur. Math. Soc. (JEMS) \textbf{16} (2014), no.~8, 1617--1671. 

\bibitem{Gap}
The GAP~Group, \emph{GAP -- Groups, Algorithms, and Programming}
 {\small \url{https://www.gap-system.org}}.

\bibitem{HofmannvanStraten} 
J.~Hofmann and D.~van Straten,  \textit{Some 
monodromy groups of finite index in $\mathrm{Sp}_4(\Z)$},
J.~Aust.~Math.~Soc. \textbf{99} (2015), no.~1, 48--62.

\bibitem{HEO}
D.~F.~Holt, B.~Eick, and E.~A.~O'Brien,
\textit{Handbook of computational group theory},
Chapman \& Hall/CRC, Boca Raton, FL, 2005.

\bibitem{Words}
A.~Hulpke, \emph{Constructive membership tests in 
some infinite matrix groups},
 {P}roceedings of the 2018 {ACM} {I}nternational {S}ymposium 
on {S}ymbolic and {A}lgebraic {C}omputation, ACM, New York, 2018, 
 pp.~215--222.

\bibitem{LubotzkySegal}
A.~Lubotzky and D.~Segal, \textit{Subgroup growth}, 
Birkh\"{a}user Verlag, Basel, 2003.

\bibitem{gaprecog}
M.~Neunh\"{o}ffer, \'{A}.~Seress, et al., 
The {\sf GAP} package  {\tt recog},
\emph{A collection of group recognition methods},
{\small \url{http://gap-packages.github.io/recog/}}.

\bibitem{PlatonovRapinchuk}
V.~Platonov and A.~Rapinchuk,
\textit{Algebraic groups and number theory},
 Academic Press, Inc.,  Boston, MA, 1994.

\bibitem{Sarnak} P.~Sarnak, \emph{Notes on thin matrix groups}, 
Thin groups and superstrong approximation, Math. Sci. Res. Inst. Publ., 
vol.~61, Cambridge Univ. Press,
  Cambridge, 2014, pp.~343--362.

\bibitem{Singh}
S.~Singh, \textit{Arithmeticity of four hypergeometric monodromy
groups associated to Calabi--Yau threefolds}, 
Int. Math. Res. Not. IMRN 2015, no.~18, 8874--8889.

\bibitem{Singh2} 
S.~Singh,  \textit{Arithmeticity of some hypergeometric 
monodromy groups in $\mathrm{Sp}(4)$},
J.~Algebra \textbf{473} (2017), 142--165.
      
\bibitem{SinghVenky}
S.~Singh and T.~N.~Venkataramana, \textit{Arithmeticity of certain
symplectic hypergeometric groups}, Duke Math. J. \textbf{163}
(2014), no.~3, 591--617.

\end{thebibliography}

\end{document}